\newcommand{\pren}[1]{\mleft(#1\mright)}
\newcommand{\tox}[1]{\overset{#1}{\to}}
\newcommand{\Bc}{\mathcal{B}}
\newcommand{\Cb}{\mathbb{C}}
\newcommand{\Eb}{\mathbb{E}}
\newcommand{\Nb}{\mathbb{N}}
\newcommand{\Rb}{\mathbb{R}}
\newcommand{\Rc}{\mathcal{R}}
\newcommand{\LT}{\ensuremath{\mathcal{L}}}
\newcommand{\leqorder}[1]{\ensuremath{\preceq_{{\rm #1}}}}
\newcommand{\geqorder}[1]{\ensuremath{\preceq_{{\rm #1}}}}
\renewcommand{\d}[1]{\ensuremath{\mathtt{d}{#1}}}
\newcommand{\SIR}{\ensuremath{\mathrm{SIR}}}
\theoremstyle{plain}
\newtheorem{theorem}{\textbf{Theorem}}
\newtheorem{corollary}{\textbf{Corollary}}
\newtheorem{lemma}{\textbf{Lemma}}
\newtheorem{property}{\textit{Property}}
\theoremstyle{definition}
\newtheorem{definition}{\textbf{Definition}}
\newtheorem*{definition*}{\textbf{Definition}}
\theoremstyle{remark}
\newtheorem*{remark*}{\textbf{Remark}}
\newcommand{\lemref}[1]{Lemma~\ref{#1}}
\newcommand{\thmref}[1]{Theorem~\ref{#1}}
\newcommand{\corref}[1]{Corollary~\ref{#1}}
\title{Contribution of the Extreme Term in the Sum of Samples with Regularly Varying Tail}
\author{Van Minh Nguyen \thanks{Huawei Technologies, France. Email: vanminh.nguyen@huawei.com}}
\begin{document}

\maketitle

\begin{abstract}
	For a sequence of random variables $(X_1, X_2, \ldots, X_n)$, $n \geq 1$, that are independent and identically distributed with a regularly varying tail with index $-\alpha$, $\alpha \geq 0$, we show that the contribution of the maximum term $M_n \triangleq \max(X_1,\ldots,X_n)$ in the sum $S_n \triangleq X_1 + \cdots +X_n$, as $n \to \infty$, decreases monotonically with $\alpha$ in stochastic ordering sense.\\
	
	\textbf{Keywords} Extreme $\cdot$ Sum $\cdot$ Regular variation $\cdot$ Stochastic ordering $\cdot$ Wireless network modeling \\
	
	\textbf{Mathematics Subject Classification (2000)} 60G70 $\cdot$ 60G50 $\cdot$ 60E15 $\cdot$ 90B15	
\end{abstract}

\section{Introduction}\label{sec:Intro}
	
	Let $(X_1, X_2, \ldots)$ be a sequence of random variables that are independent and identically distributed (i.i.d.) with distribution $F$. For $n \geq 1$, the extreme term and the sum are defined as follows: 
	\begin{equation}
		M_n \triangleq \max_{i=1}^n X_i, \quad S_n \triangleq \sum_{n=1}^n X_i.
	\end{equation}
	The influence of the extreme term in the sum has various implications in both theory and applications. In particular, it has been used to characterize the nature of possible convergence of the sums of i.i.d. random variables \citep{Darling1952}. On the other hand, besides well-known applications to risk management, insurance and finance \citep{Embrechts1997}, it has been recently applied to wireless communications for characterizing a fundamental parameter which is the signal-to-interference ratio \citep{Nguyen2010,Nguyen2017}.
	
	Denote by $\bar{F}(x) \triangleq 1 - F(x)$ the tail distribution of $F$, a primary result of this question is due to \citep{Darling1952} in which it is shown that
	\begin{property}[\citep{Darling1952}]
		Suppose that $X_i \geq 0$. Then $\Eb\pren{S_n/M_n} \to 1$ as $n \to \infty$ if for every $t > 0$ we have
		\begin{equation}\label{eq:DarlingCond}
			\lim_{x \to \infty} \frac{\bar{F}(tx)}{\bar{F}(x)} = 1.
		\end{equation}
	\end{property} 
	
	In Darling's result, condition \eqref{eq:DarlingCond} is of particular interest as it is a specific case of a more general class called \emph{regularly varying tails} which is defined in the following \citep{Bingham1989}:
	
	\begin{definition} 
		A positive, Lebesgue measurable function $h$ on $(0,\infty)$ is called \emph{regularly varying} with index $\alpha \in \Rb$ at $\infty$ if $\displaystyle \lim_{x \to \infty} \frac{h(tx)}{h(x)} = t^{\alpha}$ for every constant $0 < t < \infty$.
	\end{definition}
	In the sequel, $\Rc_{\alpha}$ denotes the class of regularly varying functions with index $\alpha$, and in particular $\Rc_{0}$ is referred to as the class of \emph{slowly varying} functions. In addition, $\tox{d}$, $\tox{p}$, and $\tox{a.s.}$ stands for the convergence in \emph{distribution}, convergence in \emph{probability}, and \emph{almost sure} convergence, respectively.
		
	Since the work of \citep{Darling1952}, there has been subsequent extensions which in particular investigated other cases of regularly varying tails. Among those, \citep{Arov1960} derived the characteristic function and limits of the jointed sum and extreme term for a regularly varying tail. \citep{Teugels1981} derived the limiting characteristic function of the ratio of the sum to order statistics, and moreover investigated norming sequences for its convergence to a constant or a normal law.  \citep{Chow1978,Anderson1991,Anderson1995} investigated the asymptotic independence of normed extreme and normed sum. Unlike the slowly varying case, \citep{OBrien1980} showed that $M_n/S_n \tox{a.s.} 0 \Leftrightarrow \Eb{X_1} < \infty$. For $\bar{F}$ regularly varying with index $-\alpha$, $\alpha > 0$, \citep{Bingham1981} showed that the extreme term only contributes a proportion to the sum:
	
	\begin{property}[\citep{Bingham1981}]
		The following are equivalent:
		\begin{enumerate}
			\item $\bar{F} \in \Rc_{-\alpha}$ for $0 < \alpha < 1$;
			\item $M_n/S_n \tox{d} R$ where $R$ has a non-degenerate distribution;
			\item $\Eb\pren{S_n/M_n} \to (1-\alpha)^{-1}$.
		\end{enumerate}
	\end{property}
	
	\begin{property}[\citep{Bingham1981}]
		Let $\mu = \Eb X_1$. The following are equivalent:
		\begin{enumerate}
			\item $\bar{F} \in \Rc_{-\alpha}$ for $1 < \alpha < 2$;
			\item $(S_n - (n-1)\mu)/M_n \tox{d} D$ where $D$ has a non-degenerate distribution;
			\item $\Eb\pren{(S_n - (n-1)\mu)/M_n} \to c$ where $c$ is a constant.
		\end{enumerate}
	\end{property}
	
	\citep{Maller1984} extended Darling's convergence in mean to convergence in probability of $S_n/M_n$:
	
	\begin{property}[\citep{Maller1984}]
		\begin{equation*}
			S_n/M_n \tox{p} 1 \Leftrightarrow \bar{F} \in \Rc_0.
		\end{equation*}
	\end{property}
	
	The ratio of the extreme to the sum has been further studied with the following result:
	\begin{property}[\citep{Downey1994}]
		If either one of the following conditions:
		\begin{enumerate}
			\item $\bar{F} \in \Rc_{-\alpha}$ for $\alpha > 1$,
			\item $F$ has finite second moment,
		\end{enumerate}
		holds, then
		\begin{equation*}
			\Eb\pren{M_n/S_n} = \frac{\Eb M_n}{\Eb S_n}\pren{1 + o(1)}, \quad \textrm{ as } n \to \infty.
		\end{equation*}
	\end{property}	 
	
	To this end, the contribution of the extreme term in the sum has been investigated and classified in the following cases:
	\begin{itemize}
		\item $\bar{F} \in \Rc_0$; 
		\item $\bar{F} \in \Rc_{-\alpha}$, $0 < \alpha < 1$; 
		\item $\bar{F} \in \Rc_{-\alpha}$, $1 < \alpha < 2$; 
		\item $\bar{F} \in \Rc_{-\alpha}$, $\alpha > 2$. 
	\end{itemize}
	Nevertheless, how the influence of the extreme term in the sum gradually varies with the regular variation index has not been quantified and remains an open question. Precisely, \emph{consider two cases with $\bar{F} \in \Rc_{-\alpha_1}$ and $\bar{F} \in \Rc_{-\alpha_2}$ in which $0 \leq \alpha_1 < \alpha_2$, which case results in larger $M_n/S_n$?} This question is particularly important for analysis and design of wireless communication networks. In this context, random variables $(X_1, X_2, \ldots)$ are used to model the signal that a user receives from base stations. It has been proven that the tail distribution of $X_i$ is regularly varying \citep{Nguyen2017} either due to the effect of distance-dependent propagation loss or due to fading \citep{Tse2005} that is regular varying \citep{Rajan2017} as a consequence of advances in communication and signal processing techniques such as massive multiple-input-multiple-output transmission, coordinated multipoint and millimeter wave systems. Meanwhile, $M_n$ expresses the useful signal power and $(S_n-M_n)$ is the total interference due to the other transmitters \citep{Nguyen2011}. $M_n/(S_n-M_n)$ is hence the signal-to-interference ratio (SIR), and its limit as $n \to \infty$ happens for a dense or ultra-dense network \citep{Nguyen2016,Nguyen2017a}. Capacity of a communication channel is expressed in term of the well-known Shannon's capacity limit of $\log(1 + \mathrm{SIR})$ \citep{Shannon1948,Cover2006} considering that thermal noise is negligible in comparison to the interference. Therefore, $M_n/S_n$ (or $S_n/M_n$) is a fundamental parameter of wireless network engineering. From the perspective of capacity, a primary purpose is to design the network such that $X_i$ possesses properties that make $M_n/S_n$ as large as possible. In particular, how $M_n/S_n$ varies according to the tail of $X_i$ turns out to be a critical question.
	
	In this paper, we establish a stochastic ordering for $S_n/M_n$ and show that between two cases with $\bar{F} \in \Rc_{-\alpha_1}$ and $\bar{F} \in \Rc_{-\alpha_2}$ in which $0 \leq \alpha_1 < \alpha_2$, the contribution of $M_n$ in $S_n$ as $n \to \infty$ is larger in the former than in the latter case in stochastic ordering sense.
	
\section{Main Result}\label{sec:Main}

	In the following, for $n \geq 1$ we define
	\begin{equation}\label{eq:DefRn}
		R_{n} \triangleq S_n/M_n.
	\end{equation}
	We also restrict our consideration to non-negative random variables, i.e., $X_i \geq 0$, and consider $\bar{F} \in \Rc_{-\alpha}$ with $\alpha \geq 0$. In the context where $\alpha$ is analyzed, a variable $v$ is written as $v_{\alpha}$, e.g., write $S_{\alpha,n}$, $M_{\alpha,n}$, and $R_{\alpha,n}$ for $S_{n}$, $M_{n}$, and $R_{n}$, respectively.
	
	\begin{lemma}\label{lem:Laplace}
		For $s \in \Cb$ with $\Re(s) \geq 0$, define $\LT_{R_n}(s) \triangleq \Eb\pren{e^{-s R_n}}$. If $\bar{F} \in \Rc_{-\alpha}$ with $\alpha \geq 0$ then:
		\begin{equation}
			\LT_{R_n}(s) = \frac{e^{-s}}{1 + \phi_{\alpha}(s)}, \quad n \to \infty,
		\end{equation}
		where
		\begin{equation}\label{eq:phi}
			\phi_{\alpha}(s) \triangleq \alpha\int_0^1 (1 - e^{-st})\frac{\d{t}}{t^{1+\alpha}}.
		\end{equation}
	\end{lemma}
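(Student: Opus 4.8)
The plan is to turn $\LT_{R_n}(s)$ into a single one–dimensional integral by conditioning on the extreme term $M_n$, and then to pass to the limit $n\to\infty$ using the regular variation of $\bar{F}$ through Potter's bounds and the uniform convergence theorem. Since $R_n = 1 + (S_n - M_n)/M_n$ we have $\LT_{R_n}(s) = e^{-s}\,\Eb\bket{\exp(-s(S_n-M_n)/M_n)}$. Assuming first that $F$ is continuous, the maximum is a.s.\ uniquely attained and, conditionally on $M_n = x$, the other $n-1$ variables are i.i.d.\ with the law of $X_1$ truncated to $[0,x]$; as $M_n$ has distribution function $F^n$, integrating the conditional expectation and cancelling the powers of $F(x)$ gives
\begin{equation*}
	\LT_{R_n}(s) = e^{-s}\, n \int_0^\infty \pren{\int_0^x e^{-sy/x}\,\d{F(y)}}^{n-1}\d{F(x)} .
\end{equation*}
An integration by parts rewrites the inner integral as $1 - B(x)$ with
\begin{equation*}
	B(x) \defeq e^{-s}\bar{F}(x) + \frac{s}{x}\int_0^x \bar{F}(y)\,e^{-sy/x}\,\d{y},
\end{equation*}
and, for every $s$ with $\Re(s)\ge 0$, $\bigl|1-B(x)\bigr| = \bigl|\int_0^x e^{-sy/x}\,\d{F(y)}\bigr| \le F(x) \le 1$.

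\emph{Asymptotics of $B$.} Substituting $y = xt$ gives $B(x)/\bar{F}(x) = e^{-s} + s\int_0^1 \frac{\bar{F}(xt)}{\bar{F}(x)}\,e^{-st}\,\d{t}$. For $0\le\alpha<1$ I would apply the uniform convergence theorem and Potter's bounds \citep{Bingham1989}: $\bar{F}(xt)/\bar{F}(x)\to t^{-\alpha}$, dominated by $C\,t^{-\alpha-\eps}$ on $(0,1]$ with $\eps$ small enough that $\alpha+\eps<1$, so that dominated convergence yields $B(x)/\bar{F}(x)\to e^{-s}+s\int_0^1 t^{-\alpha}e^{-st}\,\d{t}$; integrating \eqref{eq:phi} by parts identifies this limit with $1+\phi_{\alpha}(s)$. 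For $\alpha\ge 1$, Fatou's lemma forces $B(x)/\bar{F}(x)\to\infty$ while $B(x)\to 0$ directly, which matches the statement under the convention $\phi_{\alpha}(s) = +\infty$ and $(1+\phi_{\alpha}(s))^{-1} = 0$.

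\emph{Passing to the limit.} Since $F$ is continuous, the change of variables $v=\bar{F}(x)$ turns the displayed identity into $\LT_{R_n}(s) = e^{-s}\,n\int_0^1(1-v\kappa(v))^{n-1}\,\d{v}$, where $\kappa(v)\defeq v^{-1}B(\bar{F}^{\leftarrow}(v))\to 1+\phi_{\alpha}(s)$ as $v\downarrow 0$ by the previous step. The substitution $w=nv$ then gives $e^{-s}\int_0^n\bigl(1-(w/n)\kappa(w/n)\bigr)^{n-1}\,\d{w}$, whose integrand is bounded by $(1-w/n)^{n-1}\le e^{-w/2}$ (from $|1-v\kappa(v)|\le 1-v$) and converges pointwise to $e^{-(1+\phi_{\alpha}(s))w}$, resp.\ to $0$ when $\alpha\ge 1$ (for real $s>0$; the cases $\Re(s)=0$ and complex $s$ reduce to $|\LT_{R_n}(s)|\le\LT_{R_n}(\Re s)$ and continuity). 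Dominated convergence then gives $\LT_{R_n}(s)\to e^{-s}\int_0^\infty e^{-(1+\phi_{\alpha}(s))w}\,\d{w} = e^{-s}/(1+\phi_{\alpha}(s))$, which is valid because $\Re(1+\phi_{\alpha}(s))\ge 1$ whenever $\Re(s)\ge 0$.

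\emph{Main obstacle and remarks.} The reduction and the final dominated-convergence step are routine; the real work is the interchange of $\lim_{x\to\infty}$ with the $t$–integral in the asymptotics of $B$, which needs the quantitative (Potter) form of regular variation, and the loss of integrability of $t^{-\alpha}e^{-st}$ near $0$ at $\alpha=1$ is exactly what splits the two regimes and yields $\phi_{\alpha}(s)=+\infty$ for $\alpha\ge 1$. A minor technical point is the treatment of atoms of $F$ in the conditioning step, which only affects lower-order terms and is absorbed by a standard approximation. As a consistency check, $-\frac{d}{ds}\bigl[e^{-s}/(1+\phi_{\alpha}(s))\bigr]\big|_{s=0} = (1-\alpha)^{-1}$, recovering Bingham's value of $\lim_n\Eb(S_n/M_n)$. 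Finally, for $0<\alpha<1$ the same formula can be obtained more conceptually by normalising the $X_i$ by $a_n$ with $n\bar{F}(a_n)\to 1$, using convergence of $\sum_i\delta_{X_i/a_n}$ to the Fr\'echet Poisson process of intensity $\alpha x^{-\alpha-1}\d{x}$, and evaluating the Laplace transform of the limit of $S_n/M_n$ via the Laplace functional together with Palm calculus; this route, however, requires the joint convergence of $(S_n/a_n, M_n/a_n)$ and degenerates at $\alpha=0$ and $\alpha\ge 1$.
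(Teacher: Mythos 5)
Your proof follows essentially the same route as the paper's: condition on which term attains the maximum to get $\LT_{R_n}(s) = n e^{-s}\int_0^\infty \varphi(x)^{n-1}\,F(\d{x})$ with $\varphi(x)=\int_0^1 e^{-st}F(x\d{t})$, integrate the inner integral by parts, use regular variation to obtain $\varphi(x) = 1-(1+\phi_{\alpha}(s))\bar{F}(x)(1+o(1))$, and finish with the change of variables $v=n\bar{F}(x)$. The differences are only in rigor, where your version is in fact tighter than the paper's: you supply the Potter-bound domination needed to interchange $x\to\infty$ with the $t$-integral (the paper asserts this asymptotic equivalence without justification, and it is precisely where integrability of $t^{-\alpha}e^{-st}$ near $0$ matters), you justify the final limit by dominated convergence rather than a pointwise appeal to $(1+x/n)^n\to e^x$, and you correctly note that $\phi_{\alpha}(s)=+\infty$ for $\alpha\ge 1$, so the stated formula is non-degenerate only for $0\le\alpha<1$ --- a caveat the paper passes over in silence.
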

	
	\begin{proof}
		Let $G(x_1,\cdots,x_n)$ be the joint distribution of $(X_1,\cdots,X_n)$ given that $M_n = X_1$, it is given as follows:
		\begin{equation}\label{eq:G}
			G(\d{x_1},\cdots,\d{x_n}) = \begin{cases}
			F(\d{x_1})\cdots F(\d{x_n}) & \text{if } x_1 = \max_{i=1}^n x_i \\
			0 & \text{otherwise}
		\end{cases}.
		\end{equation}
		Since $(X_1,\cdots,X_n)$ are i.i.d., $M_n = X_1$ with probability $1/n$. Thus, the (unconditional) joint distribution of $(X_1,\cdots,X_n)$ is $nG(x_1,\cdots,x_n)$. Hence,
		\begin{equation*}
			\LT_{R_n}(s) = \Eb(e^{-s R_n}) = n\int\cdots\int e^{-s(x_1+x_2 + \cdots + x_n)/x_1} G(\d{x_1},\cdots,\d{x_n}),
		\end{equation*}
		and using $G$ from \eqref{eq:G}, we obtain
		\begin{align}
			\LT_{R_n}(s) & = n \int_0^{\infty} \pren{\int_0^{x} \cdots \int_0^x e^{-s} \prod_{i=2}^n e^{-s x_i/x} F(\d{x_i})} F(\d{x})\nonumber\\
			& = n e^{-s} \int_0^{\infty} \left( \int_0^{1} e^{-st} F(x\d{t})\right)^{n-1} F(\d{x}) \nonumber\\
			& = n e^{-s} \int_0^{\infty} \left(\varphi(x)\right)^{n-1} F(\d{x}), \label{eq:LaplaceZn}
		\end{align}
		where 
		\begin{equation}
			\varphi(x) \triangleq \int_0^{1} e^{-st}F(x\d{t}).
		\end{equation} 
		Given that $\Re(s) \geq 0$, we can see that
		\begin{equation*}
			|\varphi(x)| \leq \int_0^1 |e^{-st} F(x\d{t})| \leq \int_0^1 F(x\d{t}) = F(x) < 1, \quad \text{for } x < \infty. 
		\end{equation*}
		Hence, 
		\begin{equation*}
			\int_0^{T} \left(\varphi(x)\right)^{n-1} F(\d{x}) \to 0, \quad \textrm{as } n \to \infty  \textrm{ for } T < \infty.
		\end{equation*}
		As a result, we only need to consider the contribution of large $x$ in \eqref{eq:LaplaceZn} for $\LT_{R_n}(s)$. An integration by parts with $e^{-st}$ and $F(x\d{t})$ yields:
		\begin{align}
			\varphi(x) & = 1 - e^{-s}\bar{F}(x) - \int_0^1 s e^{-st} \bar{F}(xt) \d{t} \nonumber\\
			& = 1 - \bar{F}(x) + \int_0^1 s e^{-st} \left(\bar{F}(x) - \bar{F}(tx)\right)\d{t}. \label{eq:varphi}
		\end{align}
		For $\bar{F} \in \Rc_{-\alpha}$ with $\alpha \geq 0$, we can write 
		\begin{equation*}
			\bar{F}(tx) \sim t^{-\alpha}\bar{F}(x), \quad \textrm{ as } x \to \infty, \quad 0 < t < \infty.
		\end{equation*}
		Thus
		\begin{equation*}
			\int_0^1 s e^{-st} \left(\bar{F}(x) - \bar{F}(tx)\right)\d{t} 
			\sim \bar{F}(x)\int_0^1 s e^{-st} (1 - t^{-\alpha})\d{t}, \quad \textrm{as } x \to \infty.
		\end{equation*}
		Using an integration by parts with $(1 - t^{-\alpha})$ and $\d{(1-e^{-st})}$ we obtain
		\begin{equation}\label{eq:phi_2ndterm}
			\bar{F}(x)\int_0^1 s e^{-st} (1 - t^{-\alpha})\d{t} = -\bar{F}(x) \int_0^1 \alpha (1 - e^{-st}) \frac{\d{t}}{t^{1+\alpha}}.
		\end{equation}
		Put 
		\begin{equation*}
			\phi_{\alpha}(s) \triangleq \int_0^1 \alpha (1 - e^{-st}) \frac{\d{t}}{t^{1+\alpha}},
		\end{equation*}
		and substitute it back in \eqref{eq:phi_2ndterm} and \eqref{eq:varphi}, we obtain
		\begin{equation*}
			\varphi(x) = 1 - (1 + \phi_{\alpha}(s))\bar{F}(x), \quad \text{for } x \to \infty.
		\end{equation*}
		Substitute $\varphi(x)$ back in the expression of $\LT_{R_n}(s)$ in \eqref{eq:LaplaceZn}, we obtain
		\begin{equation}
			\LT_{R_n}(s) \sim n e^{-s}\int_0^{\infty} \left( 1 - (1 + \phi_{\alpha}(s))\bar{F}(x) \right)^{n-1} F(\d{x}), \quad \text{ as } n \to \infty.
		\end{equation}
		Here, we resort to a change of variable with $v = n \bar{F}(x)$ and obtain:
		\begin{align*}
		\LT_{R_n}(s) 
		& \sim e^{-s} \int_0^n \left(1 - \frac{v}{n}(1+\phi_{\alpha}(s))\right)^{n-1}\d{v} \\
		& \overset{(a)}{\to} e^{-s}\int_0^\infty e^{-v(1+\phi_{\alpha}(s))}\d{v} \\
		& = \frac{e^{-s}}{1 + \phi_{\alpha}(s)}, \quad \text{ as } n \to \infty 
		\end{align*}
		where $(a)$ is due to the formula $(1 + \frac{x}{n})^n \to e^x$ as $n \to \infty$.
	\end{proof}
	
	To present the main result, we use the following notation. For two random variables $U$ and $V$, $U$ is said to be smaller than $V$ in Laplace transform order, denoted by $U \leqorder{Lt} V$, if and only if $\LT_U(s)=\Eb(e^{-sU}) \geq \Eb(e^{-sV}) = \LT_V(s)$ for all positive real number $s$. 
	
	\begin{theorem}\label{thm:OrderRn}
		Let $R_{\alpha_1,n}$ and $R_{\alpha_2,n}$ be as defined in \eqref{eq:DefRn} for $\bar{F} \in \Rc_{-\alpha_1}$ with $\alpha_1 \geq 0$ and for $\bar{F} \in \Rc_{-\alpha_2}$ with $\alpha_2 \geq 0$, respectively. Then
		\begin{equation}
			\alpha_1 \leq \alpha_2 \Rightarrow R_{\alpha_1,n} \geqorder{Lt} R_{\alpha_2,n}, \quad n \to \infty.
		\end{equation}
	\end{theorem}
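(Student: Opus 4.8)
The plan is to route everything through \lemref{lem:Laplace} and reduce the claimed ordering to an elementary monotonicity property of the function $\phi_\alpha$ of \eqref{eq:phi}. By \lemref{lem:Laplace}, for every real $s>0$ one has $\LT_{R_{\alpha,n}}(s)\to e^{-s}/(1+\phi_\alpha(s))$ as $n\to\infty$. Hence, by the definition of the Laplace transform order, the conclusion $R_{\alpha_1,n}\geqorder{Lt}R_{\alpha_2,n}$ (read in the limit $n\to\infty$) is equivalent to
\[
\frac{e^{-s}}{1+\phi_{\alpha_1}(s)}\ \geq\ \frac{e^{-s}}{1+\phi_{\alpha_2}(s)}\qquad\text{for all }s>0 .
\]
First I would record the sign facts that make this legitimate: for $s\geq 0$ and $t\in(0,1)$ both $1-e^{-st}\geq 0$ and $\alpha t^{-1-\alpha}\geq 0$, so $\phi_\alpha(s)\in[0,\infty]$ and $1+\phi_\alpha(s)\geq 1>0$. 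The displayed inequality then reduces to $\phi_{\alpha_1}(s)\leq\phi_{\alpha_2}(s)$ for all $s>0$, an inequality I will interpret in $[0,\infty]$ so that the case $\phi_{\alpha_2}(s)=+\infty$ — which is exactly the regime $\alpha_2\geq 1$, consistent with $R_{\alpha_2,n}\to\infty$ and $\LT_{R_{\alpha_2,n}}(s)\to 0$ — is automatically included.

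The heart of the matter is thus to show that, for each fixed $s>0$, the map $\alpha\mapsto\phi_\alpha(s)$ is non-decreasing on $[0,\infty)$. I would derive this from a pointwise statement about the integrand in \eqref{eq:phi}: for each fixed $t\in(0,1]$ the function $\alpha\mapsto\alpha t^{-1-\alpha}$ is non-decreasing on $[0,\infty)$. This is a one-line check — writing $\alpha t^{-1-\alpha}=\alpha t^{-1}e^{\alpha\ln(1/t)}$ and differentiating in $\alpha$ gives $t^{-1-\alpha}\bigl(1+\alpha\ln(1/t)\bigr)$, which is nonnegative because $\ln(1/t)\geq 0$ on $(0,1]$. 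Consequently, for $0\leq\alpha_1\leq\alpha_2$ and every $t\in(0,1)$,
\[
(1-e^{-st})\,\alpha_1 t^{-1-\alpha_1}\ \leq\ (1-e^{-st})\,\alpha_2 t^{-1-\alpha_2},
\]
and integrating over $t\in(0,1)$ — monotonicity of the integral, valid also when the larger side is $+\infty$ — yields $\phi_{\alpha_1}(s)\leq\phi_{\alpha_2}(s)$. Chaining the reductions then finishes the proof.

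I do not anticipate a genuine obstacle; the whole argument rests on the elementary monotonicity of $\alpha\mapsto\alpha t^{-1-\alpha}$ on $(0,1]$. The only things needing care are bookkeeping: (i) reading $R_{\alpha_1,n}\geqorder{Lt}R_{\alpha_2,n}$ as a statement about the $n\to\infty$ limits of the Laplace transforms supplied by \lemref{lem:Laplace}, and noting that passing to the limit preserves the non-strict inequality between transforms; (ii) carrying the case $\alpha\geq 1$, where $\phi_\alpha(s)=+\infty$, by working in $[0,\infty]$ throughout; and (iii) observing that the Laplace transform order is needed only for each fixed real $s>0$, so the pointwise-in-$s$ convergence in \lemref{lem:Laplace} is all that is required and no uniformity in $s$ enters.
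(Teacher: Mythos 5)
Your proof is correct and follows essentially the same route as the paper: both reduce the claim, via \lemref{lem:Laplace}, to the observation that $\alpha \mapsto \alpha t^{-1-\alpha}$ is non-decreasing on $[0,\infty)$ for $t\in(0,1]$, hence $\phi_{\alpha}(s)$ is non-decreasing in $\alpha$ and the limiting Laplace transform $e^{-s}/(1+\phi_{\alpha}(s))$ is non-increasing in $\alpha$. You simply supply more bookkeeping than the paper's two-line argument (the explicit derivative check and the $[0,\infty]$-valued treatment of the divergent case $\alpha\geq 1$), which the paper leaves implicit.
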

	\begin{proof}
		The proof is direct from \lemref{lem:Laplace}. By noting that $\alpha/t^{1+\alpha}$ is increasing with respect to (w.r.t.) $\alpha \geq 0$ for $t \in [0,1]$, we have $\phi_{\alpha}(s)$ in \eqref{eq:phi} is increasing  w.r.t. $\alpha \geq 0$. Thus, $\LT_{R_{\alpha,n}}(s)$ as $n \to \infty$ and $\Re(s) > 0$ is decreasing w.r.t. $\alpha \geq 0$.
		
		It follows that, for $0 \leq \alpha_1 \leq \alpha_2$, we have $\LT_{R_{\alpha_1,n}}(s) \geq \LT_{R_{\alpha_2,n}}(s)$ as $n \to \infty$ for all $s$ with $\Re(s) > 0$, thus $R_{\alpha_1,n} \leqorder{Lt} R_{\alpha_2,n}$ as $n \to \infty$.
	\end{proof}
	
	\thmref{thm:OrderRn} dictates that the more slowly $\bar{F}$ decays at $\infty$, i.e., smaller $\alpha$, the smaller is the ratio of the sum to the extreme, thus the bigger is the contribution of the extreme term in the sum. This contribution of the extreme term to the sum increases to the ceiling limit 1 when $\alpha$ gets close to 0 as we have known from \citep{Darling1952,Maller1984}.
		
	Since we have established the Laplace transform ordering for $R_{n}$, an immediate application is related to completely monotonic and Bernstein functions. Let us recall:
	\begin{itemize}
		\item \emph{Completely monotonic functions}: A function $g: (0,\infty) \rightarrow \Rb_+$ is said to be completely monotonic if it possesses derivatives of all orders $k \in \Nb \cup \{0\}$ which satisfy $(-1)^{k} g^{(k)}(x) \geq 0$, $\forall x \geq 0$, where the derivative of order $k=0$ is defined as $g(x)$ itself. We denote by $\mathcal{CM}$ the class of completely monotonic functions.
		\item \emph{Bernstein functions}: A function $h: (0,\infty) \rightarrow \Rb_+$ with $\d{h(x)}/\d{x}$ being completely monotonic is called a Bernstein function. We denote by $\mathcal{B}$ the class of Bernstein functions.
	\end{itemize}
	Note that a completely monotonic function is positive, decreasing and convex, whereas a Bernstein function is positive, increasing and concave.	
	
	It is well known that for all completely monotonic functions $g$, we have that $U \leqorder{Lt} V \Leftrightarrow \Eb\pren{g(U)} \geq \Eb\pren{g(V)}$, whereas for all Bernstein functions $h$, $U \leqorder{Lt} V \Leftrightarrow \Eb\pren{h(U)} \leq \Eb\pren{h(V)}$. Hence, we can have a direct corollary of \thmref{thm:OrderRn} as follows.
	
	\begin{corollary}\label{cor:MeanOrdering}
		With the same notation and assumption of \thmref{thm:OrderRn}, if $0 \leq \alpha_1 \leq \alpha_2$, then:
		\begin{align*}
			\forall g \in \mathcal{CM}: & \quad \Eb\pren{g(R_{\alpha_1,n})} \geq \Eb\pren{g(R_{\alpha_2,n})}, \quad \textrm{as } n \to \infty, \\
			\forall h \in \Bc: & \quad \Eb\pren{h(R_{\alpha_1,n})} \leq \Eb\pren{h(R_{\alpha_2,n})}, \quad \textrm{as } n \to \infty.
		\end{align*}
	\end{corollary}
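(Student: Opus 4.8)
The plan is to deduce \corref{cor:MeanOrdering} directly from \thmref{thm:OrderRn} by means of the classical integral representations of the two function classes, which convert the claims about $\Eb\pren{g\pren{R_{\alpha,n}}}$ and $\Eb\pren{h\pren{R_{\alpha,n}}}$ into the Laplace-transform comparison that \thmref{thm:OrderRn} already supplies. First I would recall these representations: $g \in \CM$ if and only if $g(x) = \int_{[0,\infty)} e^{-sx}\,\mu_g(\d{s})$ for some positive Borel measure $\mu_g$ on $[0,\infty)$ (Hausdorff--Bernstein--Widder), and $h \in \Bc$ if and only if $h(x) = a + bx + \int_{(0,\infty)} \pren{1 - e^{-sx}}\,\nu_h(\d{s})$ for constants $a,b \geq 0$ and a measure $\nu_h$ with $\int_{(0,\infty)} (1 \wedge s)\,\nu_h(\d{s}) < \infty$.

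The structural point that makes everything go through is that $X_i \geq 0$ forces $R_{\alpha,n} = S_n/M_n \geq 1$ almost surely, so $e^{-sR_{\alpha,n}} \leq e^{-s}$ and hence $\LT_{R_{\alpha,n}}(s) \leq e^{-s}$ for every $s \geq 0$, uniformly in $n$. Tonelli's theorem then gives
\begin{equation*}
	\Eb\pren{g\pren{R_{\alpha,n}}} = \int_{[0,\infty)} \LT_{R_{\alpha,n}}(s)\,\mu_g(\d{s}) \leq \int_{[0,\infty)} e^{-s}\,\mu_g(\d{s}) = g(1) < \infty
\end{equation*}
for $g \in \CM$, and
\begin{equation*}
	\Eb\pren{h\pren{R_{\alpha,n}}} = a + b\,\Eb\pren{R_{\alpha,n}} + \int_{(0,\infty)} \pren{1 - \LT_{R_{\alpha,n}}(s)}\,\nu_h(\d{s})
\end{equation*}
for $h \in \Bc$. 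By \lemref{lem:Laplace}, $\LT_{R_{\alpha,n}}(s) \to e^{-s}/\pren{1 + \phi_{\alpha}(s)} =: \ell_{\alpha}(s)$ pointwise in $s > 0$, so the first display converges, by dominated convergence with the $\mu_g$-integrable envelope $s \mapsto e^{-s}$, to $\int_{[0,\infty)} \ell_{\alpha}(s)\,\mu_g(\d{s})$. Since \thmref{thm:OrderRn} says precisely that $\alpha \mapsto \phi_{\alpha}(s)$ is nondecreasing, whence $\ell_{\alpha_1}(s) \geq \ell_{\alpha_2}(s)$ for $\alpha_1 \leq \alpha_2$, integrating against the nonnegative measure $\mu_g$ yields $\Eb\pren{g\pren{R_{\alpha_1,n}}} \geq \Eb\pren{g\pren{R_{\alpha_2,n}}}$ in the limit $n \to \infty$. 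The Bernstein inequality follows identically from $1 - \ell_{\alpha_1}(s) \leq 1 - \ell_{\alpha_2}(s)$ together with $\Eb\pren{R_{\alpha_1,n}} \leq \Eb\pren{R_{\alpha_2,n}}$, the latter being read off from the same comparison via the identity $\Eb\pren{R_{\alpha,n}} = \sup_{s>0}\pren{1 - \LT_{R_{\alpha,n}}(s)}/s$, valid (with value $+\infty$ allowed) because $\LT_{R_{\alpha,n}}$ is convex with $\LT_{R_{\alpha,n}}(0)=1$.

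The one place where care is genuinely needed is the integrability bookkeeping in the Bernstein case. For $\alpha \geq 1$ the integral $\phi_{\alpha}(s)$ diverges, so $\ell_{\alpha}(s) = 0$ for all $s > 0$, the limiting law of $R_{\alpha,n}$ is the improper one at $+\infty$, $\Eb\pren{R_{\alpha,n}}$ need not stay bounded, and for an unbounded $h \in \Bc$ the quantity $\Eb\pren{h\pren{R_{\alpha,n}}}$ may diverge; then $\Eb\pren{h\pren{R_{\alpha_1,n}}} \leq \Eb\pren{h\pren{R_{\alpha_2,n}}}$ still holds, trivially when the right-hand side is infinite, and otherwise (e.g.\ when $\alpha_2 < 1$, so all quantities are finite) from the $\sup$-representation of the mean and dominated convergence against the $\nu_h$-integrable envelope $s \mapsto 1 \wedge s$, but this dichotomy has to be spelled out. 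The completely monotone part is immune, thanks to the uniform bound $\Eb\pren{g\pren{R_{\alpha,n}}} \leq g(1)$. Should one prefer to bypass the case analysis, one can simply invoke as a black box the well-known equivalences $U \leqorder{Lt} V \Leftrightarrow \Eb\pren{g(U)} \geq \Eb\pren{g(V)}$ for all $g \in \CM$ and $U \leqorder{Lt} V \Leftrightarrow \Eb\pren{h(U)} \leq \Eb\pren{h(V)}$ for all $h \in \Bc$, recalled above, whereupon \corref{cor:MeanOrdering} is an immediate rephrasing of \thmref{thm:OrderRn}.
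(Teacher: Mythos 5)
Your proof is correct and ultimately takes the same route as the paper: the paper simply invokes the well-known equivalence between the Laplace transform order and the ordering of $\Eb\pren{g(\cdot)}$ for $g \in \CM$ (reversed for $h \in \Bc$) and reads the corollary off \thmref{thm:OrderRn}, which is exactly the ``black box'' shortcut in your closing remark. The only difference is that you additionally unpack that equivalence via the Bernstein--Widder integral representations and flag the degenerate case $\alpha \geq 1$ (where $\phi_{\alpha}(s) = \infty$ and the limiting law escapes to infinity), bookkeeping that the paper leaves implicit.
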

	
	Note that $h(x) = 1$, $\forall x >0$, is a Bernstein function, whereas $g(x) = 1/x$, $\forall x > 0$, is a completely monotonic function. For two cases with $\bar{F} \in \Rc_{-\alpha_1}$ and $\bar{F} \in \Rc_{-\alpha_2}$ with $0 \leq \alpha_1 \leq \alpha_2$, \corref{cor:MeanOrdering} gives:
	\begin{align*}
		\Eb\pren{\frac{S_{\alpha_1,n}}{M_{\alpha_1,n}}} & \leq \Eb\pren{\frac{S_{\alpha_2,n}}{M_{\alpha_2,n}}}, \quad n \to \infty,\\
		\Eb\pren{\frac{M_{\alpha_1,n}}{S_{\alpha_1,n}}} & \geq \Eb\pren{\frac{M_{\alpha_2,n}}{S_{\alpha_2,n}}}, \quad n \to \infty.
	\end{align*}

	\paragraph{Application Example}
	We now can show an application of the results developed above to the context of wireless communication networks. The signal-to-interference ratio $\SIR$ as described in the Introduction can be expressed in terms of $R_n$ as follows:
	\begin{equation}
		\frac{1}{\SIR} = \frac{S_n - M_n}{M_n} = R_n - 1 := Z_n.
	\end{equation}
	Assume that $\bar{F} \in \Rc_{-\alpha}$, $\alpha \geq 0$, the Laplace transform of $Z_n$ can be directly obtained from that of $R_n$ as given by \lemref{lem:Laplace} as follows:
	\begin{equation}
		\LT_{Z_{\alpha,n}}(s) = e^{s}\LT_{R_{\alpha,n}}(s) = \pren{1 + \phi_{\alpha}(s)}^{-1}, \quad n \to \infty,
	\end{equation}
	for all $s \in \Cb$, $\Re{s} > 0$. It is easy to see that $\LT_{Z_{\alpha,n}}(s)$ is also decreasing with respect to $\alpha \geq 0$ (see the proof of \thmref{thm:OrderRn}). 
	
	Now, consider two cases for the distribution of the signal $X_i$ which are $\bar{F} \in \Rc_{-\alpha_1}$ and $\bar{F} \in \Rc_{-\alpha_2}$ with $0 \leq \alpha_1 \leq \alpha_2$, we firstly have:
	\begin{equation}
		Z_{\alpha_1,n} \leqorder{Lt} Z_{\alpha_2,n}, \quad n \to \infty.
	\end{equation}	
	Then, by noting that functions $1/x$ and $\log(1 + 1/x)$ with $x > 0$ both are completely monotonic, we immediately have for $n \to \infty$:
	\begin{align}
		\Eb(\SIR_{\alpha_1}) & \geq \Eb(\SIR_{\alpha_2}), \\
		\Eb(\log(1+\SIR_{\alpha_1})) & \geq \Eb(\log(1 + \SIR_{\alpha_2})).		
	\end{align}
	This says that it is beneficial to the communication quality and capacity to design the network such that the signal received from a transmitting base station $X_i$ admits a regularly varying tail with as small variation index as possible (i.e., as close to a slowly varying tail as possible).
	

\bibliographystyle{authordate1}
\bibliography{extremes}

\end{document}